\def\IE{{\mathbb E}}
\def\IP{{\mathbb P}}
\def\IR{{\mathbb R}}
\def\IZ{{\mathbb Z}}
\def\ar{a \rho + \frac{\rho^2}{2}}
\def\n{\noindent}
\def\wt{\widetilde}
\def\cB{{\cal B}}
\def\cC{{\cal C}}
\def\cI{{\cal I}}
\def\cJ{{\cal J}}
\def\cV{{\cal V}}
\newtheorem{theorem}{Theorem}[section]
\newtheorem{corollary}[theorem]{Corollary}
\newtheorem{remark}[theorem]{Remark}
\begin{document}

\noindent
~

\bigskip
\begin{center}
{\bf ON COUPLING AND VACANT SET LEVEL SET PERCOLATION}
\end{center}

\begin{center}
Alain-Sol Sznitman
\end{center}

\begin{center}
Preliminary Draft
\end{center}

\begin{abstract}
In this note we discuss vacant set level set percolation on a transient weighted graph. It interpolates between the percolation of the vacant set of random interlacements and the level set percolation of the Gaussian free field. We employ coupling and derive a stochastic domination from which we deduce in a rather general set-up a certain monotonicity property of the percolation function. In the case of regular trees this stochastic domination leads to a strict inequality between some eigenvalues related to Ornstein-Uhlenbeck semi-groups for which we have no direct analytical proof. It underpins a certain strict monotonicity property that has significant consequences for the percolation diagram. It is presently open whether a similar looking diagram holds in the case of $\IZ^d$, $d \ge 3$.
\end{abstract}

\vspace{4cm} 
\n
Departement Mathematik \hfill July 2018
\\
ETH Z\"urich\\
CH-8092 Z\"urich\\
Switzerland

\vfill

~
\newpage
\thispagestyle{empty}
~

\newpage
\setcounter{page}{1}

\setcounter{section}{-1}
\section{Introduction}

Given a connected, locally finite, transient weighted graph with countable vertex set $E$, one can consider the random interlacements $\cI^u$ at level $u \ge 0$ on $E$, with corresponding vacant set $\cV^u = E \backslash \cI^u$, and governing probability $\IP^I$, see \cite{Teix09b}, as well as the Gaussian free field $\varphi$ on $E$, with corresponding super level sets $\{\varphi > a\}$, $a \in \IR$, and governing probability $\IP^G$. The percolative properties of $\cV^u$ and of $\{\varphi > a\}$ have been the object of much interest, see for instance \cite{CernTeix12}, \cite{DrewPrevRodr}, \cite{DrewRathSapo14c}, and the references therein. There are strong links between the two models that result from Dynkin-type isomorphism theorems, see \cite{Szni12b}, \cite{Lupu16}, and in a broader context \cite{MarcRose06}, \cite{SaboTarr16}, \cite{LupuSaboTarr}. The consideration of cable graphs and the resulting extended couplings constructed in \cite{Lupu16}, later refined in \cite{Szni16}, provide in good cases efficient tools to compare the two percolation models. To date, they are the {\it only} tools to establish, for instance in the case of $\IZ^d$, $d \ge 3$, with unit weights, see \cite{Lupu16}, or for a broad class of transient trees, see
 \cite{AbacSzni18}, the inequality $h_* \le \sqrt{2u_*}$, where $h_*$ and $u_*$ respectively stand for the critical levels of level set percolation of the Gaussian free field, and of the percolation of the vacant set of random interlacements. The strict inequality $h_* < \sqrt{2 u_*}$ is more delicate and has so far only been established (with the help of the coupling of \cite{Szni16}) for a broad class of transient trees, see Section 3 of \cite{AbacSzni18}.
 
 In this note we consider the vacant set level set percolation that provides an interpolation between the two percolation models, and will perhaps bring some progress on the above mentioned issues. With 
$x_0$ some base point in $E$, we consider the probability under the product measure that the connected component of $x_0$ in $\cV^u \cap \{\varphi > a\}$ is infinite, namely:
\begin{equation}\label{0.1}
\tau(u,a) = \IP^I \otimes \IP^G \big[x_0 \overset{\cV^u \cap \{\varphi > \alpha\}}{\mbox{\Large $\longleftrightarrow$}} \infty\big], \;\mbox{for} \; u \ge 0, a \in \IR .
\end{equation}

\n
Thus $u = 0$ corresponds to level set percolation of the Gaussian free field, and (in an informal fashion) $a \rightarrow - \infty$ recovers vacant set percolation of random interlacements. Further, as explained in Remark \ref{rem1.3} 2), the positivity or the vanishing of $\tau(u,a)$ does not depend on the choice of the base point $x_0$.

In Theorem \ref{theo1.1} of Section 1 we obtain in a rather general set-up a key stochastic comparison result that dominates for $a \ge 0, \rho > 0$, the super level set $\{\varphi > a + \rho\}$ by the deletion from $\{\varphi > a\}$ of a certain random thickening of an independent interlacement $\cI^{a\rho + \frac{\rho^2}{2}}$, see (\ref{1.11}). The percolation function $\tau$ is non-increasing in $(u,a)$. However, as a simple minded by-product of Theorem \ref{theo1.1}, we show in Corollary \ref{cor1.2} that in the quite general set-up of Section 1
\begin{equation}\label{0.2}
\mbox{for any $h \ge 0$, the map $u \in \big [0,\frac{h^2}{2}\big] \rightarrow \tau (u, \sqrt{h^2 - 2u}) \ge 0$ is non-decreasing}.
\end{equation}

\n
This monotonicity property incidentally provides new sufficient conditions for $h_* < \sqrt{2 u_*}$ as explained in Remark in \ref{rem1.3} 3).

\medskip
We apply the techniques of Section 1 to the case of the $(d+1)$-regular tree $T$ (with $d \ge 2$) endowed with unit weights. The vacant cluster of  a base point $x_0$ for random interlacements on $T$ can in essence be studied by means of a Galton-Watson process, see Section 5 of \cite{Teix09b}, and this leads to a characterization of the critical value $u_*$ through the equation
\begin{equation}\label{0.3}
d \, e^{-u_* \frac{(d-1)^2}{d}} = 1.
\end{equation}

\n
The case of level set percolation of the Gaussian free field is definitely more intricate. Namely, to characterize the critical level $h_*$, one considers
\begin{equation}\label{0.4}
\mbox{$\nu$ the centered Gaussian law on $\IR$ with variance $\sigma^2 \stackrel{\rm def}{=} \frac{d}{d^2 -1}$},
\end{equation}
and for $h \in \IR$ one defines
\begin{equation}\label{0.5}
\begin{split}
\lambda_h = &\; \mbox{the operator norm in $L^2(\nu)$ of the self-adjoint Hilbert-Schmidt operator}
\\[-0.5ex]
& \; \mbox{$L_h = \pi_h \, L \, \pi_h$, where $\pi_h$ is the multiplication by $1_{[h,+ \infty)}$, and $L = d \; Q_{t =  \frac{1}{d}}$,}
\end{split}
\end{equation}

\n
with $Q_t, t \ge 0$, the Ornstein-Uhlenbeck semi-group  with invariant measure $\nu$ given by $(Q_t \,g)(a) = E^Y[g(a e^{-t} + \sqrt{1-e^{-2t}} \,Y)]$ for $a \in \IR$, $g$ in $L^2(\nu)$, and $E^Y$ the expectation relative to a $\nu$-distributed random variable $Y$. As shown in Section 3 of \cite{Szni16}
\begin{align}
&\mbox{$h \rightarrow \lambda_h$ is a decreasing homeomorphism of $\IR$ onto $(0,d)$, and} \label{0.6}
\\[0.5ex]
&\mbox{$h_*$ is characterized by the identity $\lambda_{h_*} = 1$}. \label{0.7}
\end{align}

\n
One knows that $0<h_* < \sqrt{2 u_*}$, see Corollary 4.5 of \cite{Szni16}, but no explicit formula for $h_*$ (i.e. comparable to (\ref{0.3})) is presently available.

\medskip
In Section 2 we use in an essential way the stochastic domination established in Theorem \ref{theo1.1} to prove a key strict inequality stated in Theorem \ref{theo2.1}. It leads to Corollary \ref{cor2.2}, where it is shown that the quantity 
\begin{equation}\label{0.8} 
\lambda (u,a) = \lambda_a \,e^{-u \frac{(d-1)^2}{d}}, \; \mbox{for $u \ge 0, a \in \IR$},
\end{equation}
which is decreasing in $(u,a)$, however satisfies the strict monotonicity property
\begin{equation}\label{0.9}
\mbox{for any $h > 0$, $u \in \big[0,\frac{h^2}{2}\big] \longrightarrow \lambda \big(u, \sqrt{h^2 - 2u}\big)$ is strictly increasing}.
\end{equation}

\n
In Theorem \ref{theo3.1} of Section 3 we present the link relating the functions $\tau$ and $\lambda$ of (\ref{0.1}) and (\ref{0.8}). This endows the curve $\lambda(u,a) = 1$ with the interpretation of a ``critical line'' for vacant set level set percolation on $T$, generalizing in spirit (\ref{0.7}). The strict monotonicity property (\ref{0.9}) established in Corollary \ref{cor2.2} then provides significant information on the loci $\tau = 0$ and $\tau > 0$ stated in Corollary \ref{cor3.2}, see also Figure 1. It is presently open, whether a similar looking diagram holds in the case of $\IZ^d$, $d \ge 3$. 

\section{Coupling and stochastic domination}
\setcounter{equation}{0}

In this section we first introduce some objects related to the cable graph attached to our basic transient graph and recall some of their properties. In the main Theorem \ref{theo1.1} of this section, we provide in a quite general set-up a coupling that relates level sets above some positive value of the Gaussian free field in the cable graph to the random interlacements on the cable graph. This comes as an application of the coupling constructed in Section 2 of \cite{Szni16} that refines Lupu's coupling from \cite{Lupu16}. The stochastic domination is then a natural consequence. In Section 2 it will be a key tool when we discuss the case of regular trees. As a simple minded application of the stochastic domination in Theorem \ref{theo1.1}, we establish in Corollary \ref{cor1.2} the monotonicity property of the function $\tau$ stated in (\ref{0.2}).

We thus consider a locally finite, connected, transient weighted graph with vertex set $E$, and symmetric weights $c_{x,y} = c_{y,x} \ge 0$, which are positive exactly when $x \sim y$, that is, when $x$ and $y$ are neighbors. The Green function can then be constructed by considering the discrete time walk on $E$ that jumps, when in $x \in E$, to a neighbor $y$ of $x$ with the probability $c_{x,y} / \lambda_x$ where $\lambda_x = \sum_{x' \sim x} c_{x,x'}$. If $P_x$ denotes the canonical law of the walk starting at $x$, $E_x$ the corresponding expectation, and $(X_k)_{k \ge 0}$ the canonical process, the Green function is symmetric and equals
\begin{equation}\label{1.1}
g(x,y) = \frac{1}{\lambda_y} \; E_x \big[\textstyle\sum\limits^\infty_{k=0} 1\{X_k = y\}\big], \; \mbox{for $x,y \in E$}.
\end{equation}

\n
The cable graph is obtained by attaching to each edge $e = \{x,y\}$ of the above graph a compact interval of length $(2c_{x,y})^{-1}$, with endpoints respectively identified to $x$ and $y$, and denoted by $\wt{[x,y]}$. We denote by $\wt{E} \supseteq E$ the resulting set, which is endowed with a natural metric and Lebesgue measure $m$, and by $\wt{g}(z,z')$, $z,z' \in \wt{E}$   the Green function on the cable graph. It is jointly continuous, and its restriction to $E \times E$ coincides with $g$ in (\ref{1.1}). We refer to Section 2 of \cite{Lupu16}, Section 2 of \cite{Folz14}, and Section 3 of \cite{Zhai18} for more details.

The Gaussian free field on the cable graph $\wt{E}$ can be realized as the law $\wt{\IP}^G$ on the canonical space $\wt{\Omega}$ of continuous real valued functions on $\wt{E}$, endowed with its canonical $\sigma$-algebra generated by the canonical coordinates $(\wt{\varphi}_z)_{z \in \wt{E}}$, such that
\begin{equation}\label{1.2}
\begin{array}{l}
\mbox{under $\wt{\IP}^G$, $(\wt{\varphi}_z)_{z \in \wt{E}}$ is a centered Gaussian process with covariance}
\\
\mbox{$\wt{E}^G [\wt{\varphi}_z \wt{\varphi}_{z'}] = \wt{g}(z,z')$, for $z,z' \in \wt{E}$ (with $\wt{\IE}^G$ the $\wt{\IP}^G$-expectation)}.
\end{array}
\end{equation}

\n
The law of $(\wt{\varphi}_x)_{x \in E}$ (i.e.~the restriction of $\wt{\varphi}$ to $E$) then coincides with the canonical law $\IP^G$ (on $\IR^E$) of the Gaussian free field on $E$ (i.e.~centered Gaussian with covariance $g(\cdot, \cdot)$ from (\ref{1.1})). As a consequence of the Markov property satisfied by $\wt{\varphi}$ under $\wt{\IP}^G$, see for instance (1.8) of \cite{Szni16}, one also knows that (see above Lemma 2.1 of \cite{DrewPrevRodr}):
\begin{equation}\label{1.3}
\begin{array}{l}
\mbox{Under $\wt{\IP}^G$, conditionally on $(\wt{\varphi}_x)_{x \in E}$, the law of the restrictions of $\wt{\varphi}$ to}
\\
\mbox{the various segments $[\wt{x,y}]$ of $\wt{E}$ between neighboring sites $x \sim y$ in $E$, is}
\\
\mbox{that of independent bridges in length $(2 c_{x,y})^{-1}$ of variance 2 Brownian}
\\
\mbox{motion, with boundary values $\wt{\varphi}_x, \wt{\varphi}_y$}.
\end{array}
\end{equation}

\n
We will assume from now on that
\begin{equation}\label{1.4}
\mbox{$\wt{\IP}^G$-a.s., $\{z \in \wt{E}$; $\wt{\varphi}_z >0]$ only has bounded connected components}.
\end{equation}

\n
This assumption is for instance satisfied in the case of the cable graph attached to $\IZ^d$, $d \ge 3$, endowed with unit weights, see Section 5 of \cite{Lupu16}, or in the case of the $(d+1)$-regular tree with $d \ge 2$, endowed with unit weights, see Section 4 of \cite{Szni16}. We also refer to Proposition 2.2 of \cite{AbacSzni18} for more general examples of transient trees where (\ref{1.4}) holds. We further assume that condition (1.43) of \cite{Szni16} holds, namely,
\begin{equation}\label{1.5}
\begin{array}{l}
\mbox{there is a $g_0 < \infty$ such that all connected components in $E$ of}
\\
\mbox{$\{x \in E; g(x,x) >g_0\}$ are finite}.
\end{array}
\end{equation}

\n
With (\ref{1.4}) and (\ref{1.5}) we will be able to use Theorem 2.4 of \cite{Szni16} (recalled below at the beginning of the proof of Theorem \ref{theo1.1}). It refines Lupu's coupling between the Gaussian free field and the random interlacements on the cable graph, see Proposition 6.3 of \cite{Lupu16}. 

We now proceed with some notation concerning random interlacements on the cable graph. Given a level $u > 0$, we consider on some probability space $(\wt{W}, \cB, \wt{\IP}^I)$ a continuous random field $(\wt{\ell}_{z,u})_{z \in \wt{E}}$ on $\wt{E}$ describing the local times with respect to the Lebesgue measure $m$ on $\wt{E}$ of random interlacements at level $u$ on $\wt{E}$, see Section 6 of \cite{Lupu16}, or below (1.21) of \cite{Szni16}, and assume that
\begin{align}
&\mbox{$\wt{\cI}^u \stackrel{\rm def}{=} \{z \in \wt{E}; \wt{\ell}_{z,u} >0\}$ is an open set, which only has unbounded components,} \label{1.6}
\\[1ex]
& \partial \wt{\cI}^u \cap E = \phi , \label{1.7}
\end{align}

\n
where for $A \subseteq \wt{E}$, $\partial A$ stands for the boundary of $A$.

\smallskip
The trace of $\wt{\cI}^u$ on $E$ then recovers random interlacements at level $u$ on $E$, in the sense that
\begin{equation}\label{1.8}
\mbox{$\wt{\cI}^u \cap E$ under $\wt{\IP}^I$ has same law as $\cI^u$ under $\IP^I$}.
\end{equation}

\n
Theorem \ref{theo1.1} below involves random open subsets of $\wt{E}$. We endow the collection of open subsets of $\wt{E}$ with the $\sigma$-algebra generated by the events $\{\wt{O}$ open in $\wt{E}$; $\wt{O} \supseteq \wt{K}\}$, where $\wt{K}$ varies over the collection of compact subsets of $\wt{E}$ (or equivalently over the collection of closed ``sub-intervals'' of segments $[\wt{x,y}]$, with $x \sim y$ in $E$). Incidentally, note that for any given $z$ in $\wt{E}$ the map that to $\wt{O}$ open subset of $\wt{E}$ associates the connected component of $z$ in $\wt{O}$, is measurable. We also refer to Chapter 2 \S1 of \cite{Math75} for further properties (results in \cite{Math75} are stated in terms of random closed sets). Recall assumptions (\ref{1.4}), (\ref{1.5}).

\begin{theorem}\label{theo1.1} (coupling and stochastic domination)

\smallskip\n
Consider $a \ge 0$, $\rho >0$, then 
\begin{align}
&\mbox{under $\wt{\IP}^G$ the random open set $\{\wt{\varphi} >a + \rho\}$ has the same law as the union of} \label{1.9}
\\[-1.2ex]
&
\mbox{connected components of $\{\wt{\varphi} >a\}$ that do not intersect $\wt{\cI}^{\ar}$ under $\wt{\IP}^G \otimes \wt{\IP}^I$,} \nonumber
\\[2ex]
&\mbox{under $\IP^G$, $\{\varphi >a + \rho\}$ has the same law as $\{\varphi > a\} \backslash \{$the union of connected}\label{1.10}
\\[-1.2ex]
&\mbox{components of $\{\wt{\varphi} > a\}$ that intersect $\wt{\cI}^{\ar}\}$ under $\wt{\IP}^G \otimes \wt{\IP}^I$} \nonumber
\end{align}

\n
(in the second occurrence in (\ref{1.10}) $\varphi$ is understood as the restriction of $\wt{\varphi}$ to $E$). Moreover, considering independent random interlacements and Gaussian free field on $E$, 
\begin{align}
&\mbox{$\{\varphi > a + \rho\}$ is stochastically dominated by $\{\varphi > a\} \backslash \{$the union of connected} \label{1.11}
\\[-1ex]
&\mbox{components in $E$ of $\cI^{\ar}$ for the conditional on $\varphi$ independent edge percol-}\nonumber
\\[-0.5ex]
& \mbox{ation on $E$ with success probability $1 - e^{-2c_{x,y}(\varphi_x-a)_+ (\varphi_y -a )_+}$ for $x \sim y$ in $E\}$}. \nonumber
\end{align}


\end{theorem}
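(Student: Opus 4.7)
The plan is to invoke Theorem 2.4 of \cite{Szni16}, which under the standing assumptions (\ref{1.4}) and (\ref{1.5}) provides, for every $v>0$, a joint construction of $\wt{\varphi}$ and $\wt{\cI}^{v}$ on the cable graph encoding the ``level-change'' form of the refined Lupu isomorphism. Writing $v = a\rho+\rho^2/2 = \tfrac{1}{2}\bigl((a+\rho)^2-a^2\bigr)$, this coupling is tailored so that the union of the connected components of $\{\wt{\varphi}>a\}$ which do \emph{not} intersect $\wt{\cI}^{v}$ has the same law, as a random open subset of $\wt{E}$, as $\{\wt{\varphi}>a+\rho\}$. This is precisely (\ref{1.9}). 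To pass to (\ref{1.10}) I would take the trace on $E$, using (\ref{1.7}) to ensure that the intersection of a cable component with $\wt{\cI}^{v}$ is well-behaved, and (\ref{1.8}) to identify $\wt{\cI}^{v}\cap E$ in law with $\cI^{v}$; the trace of $\{\wt{\varphi}>a+\rho\}$ on $E$ is $\{\varphi>a+\rho\}$, while the trace of the complementary set is $\{\varphi>a\}$ with those $E$-vertices removed which lie on cable components of $\{\wt{\varphi}>a\}$ met by $\wt{\cI}^{v}$.

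For (\ref{1.11}) the idea is to replace the cable-side ``bad components'' by a coarser discrete edge percolation on $E$ built from $\varphi$ alone. By the bridge description (\ref{1.3}), conditionally on $\varphi$ the pieces $\wt{\varphi}|_{\wt{[x,y]}}$ are independent Brownian bridges of duration $(2c_{x,y})^{-1}$, variance $2$ per unit time, and prescribed endpoints. The classical formula for the bridge infimum then gives, on $\{\varphi_x>a,\,\varphi_y>a\}$, that the bridge remains strictly above $a$ with probability $1-\exp(-2c_{x,y}(\varphi_x-a)(\varphi_y-a))$, while it almost surely crosses $a$ as soon as $\min(\varphi_x,\varphi_y)\le a$. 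Declaring the edge $\{x,y\}$ open precisely when its bridge stays above $a$ therefore defines, conditionally on $\varphi$ and independently of $\wt{\cI}^{v}$, a Bernoulli edge percolation on $E$ with the success probabilities displayed in (\ref{1.11}).

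The final step is the pointwise comparison. Let $C'\subseteq E$ be the trace on $E$ of the union of cable components of $\{\wt{\varphi}>a\}$ that meet $\wt{\cI}^{v}$, and let $C\subseteq E$ be the union of the components of $\cI^{v}=\wt{\cI}^{v}\cap E$ in the above edge percolation. If $x\in C\cap\{\varphi>a\}$, there is an open-edge path in $E$ from $x$ to some $y\in\cI^{v}$ along which $\varphi$ stays above $a$; lifting this path cable by cable yields a connected arc in $\{\wt{\varphi}>a\}\subseteq\wt{E}$ containing both $x$ and $y\in\wt{\cI}^{v}$, so $x\in C'$. Thus $\{\varphi>a\}\setminus C'\subseteq\{\varphi>a\}\setminus C$ pointwise, and combining with (\ref{1.10}) yields the stochastic domination (\ref{1.11}); the required independence of $\cI^{v}$ from $\varphi$ is inherited from $\wt{\IP}^G\otimes\wt{\IP}^I$, and the conditional independence of the bridges from $\wt{\cI}^{v}$ given $\wt{\varphi}|_E$.

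The main obstacle will be this final lift $C\cap\{\varphi>a\}\subseteq C'$: one must argue that discrete open-edge connectivity on $E$ really produces a connected piece inside $\{\wt{\varphi}>a\}$, and one must \emph{not} expect a converse inclusion -- the cable picture is genuinely richer, since $\wt{\cI}^{v}$ can enter the interior of some cable $\wt{[x,y]}$ without placing either endpoint in $\cI^{v}$, so $C'\setminus C$ can be nonempty. It is precisely this asymmetry that forces (\ref{1.11}) to be a stochastic domination rather than an equality in law, and it is what is exploited to derive the strict inequality in Section~2.
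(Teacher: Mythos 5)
Your treatment of (1.10) and (1.11) matches the paper's route. You take the trace on $E$, observe that passing from the cable interlacement $\wt{\cI}^{\ar}$ to its trace $\cI^{\ar}=\wt{\cI}^{\ar}\cap E$ can only shrink the removed set and hence enlarge the complement, and then use the bridge description (\ref{1.3}) to encode the cable-connectivity of $\{\wt{\varphi}>a\}$ across an edge $\{x,y\}$ as a conditional-on-$\varphi$ Bernoulli edge with success probability $1-e^{-2c_{x,y}(\varphi_x-a)_+(\varphi_y-a)_+}$, noting that the edge is almost surely closed when $\min(\varphi_x,\varphi_y)\le a$. Your pointwise inclusion $C\cap\{\varphi>a\}\subseteq C'$ (lifting an open-edge path cable by cable) is the correct inequality, and your diagnosis of the asymmetry (the cable interlacement can enter the interior of a cable without hitting $E$) is exactly why one only gets stochastic domination in (\ref{1.11}).

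The gap is in (\ref{1.9}). Theorem 2.4 of \cite{Szni16} does not, as you present it, come ``tailored'' to compare level $a$ with level $a+\rho$: what it actually delivers is a $0\!\to\!\sqrt{2u}$ comparison, namely that the union of connected components of $\{\wt{\varphi}>0\}$ not meeting $\wt{\cI}^{u}$ has the law of $\{\wt{\varphi}>\sqrt{2u}\}$. To obtain (\ref{1.9}) when $a>0$ one must iterate and use additivity of interlacement levels: write $\wt{\cI}^{\frac12(a+\rho)^2}$ as the union of two \emph{independent} interlacements $\wt{\cI}_1^{\,a^2/2}\cup\wt{\cI}_2^{\ar}$, apply the $0\!\to\!\sqrt{2u}$ statement at $u=\frac12(a+\rho)^2$, rewrite the removal of components meeting $\wt{\cI}_1^{\,a^2/2}\cup\wt{\cI}_2^{\ar}$ as first cutting $\{\wt{\varphi}>0\}$ by $\wt{\cI}_1^{\,a^2/2}$ and then cutting the result by $\wt{\cI}_2^{\ar}$, and finally apply the $0\!\to\!\sqrt{2u}$ statement once more at $u=a^2/2$ to recognize the intermediate set as distributed like $\{\wt{\varphi}>a\}$. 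You did spot the crucial identity $\ar=\frac12\bigl((a+\rho)^2-a^2\bigr)$, which is precisely what makes this two-step decomposition close, but the iteration itself is what turns the cited theorem into a proof of (\ref{1.9}), and it needs to be spelled out rather than folded into the phrase ``the coupling is tailored so that''.
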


\begin{proof}
We first prove (\ref{1.9}). We employ the coupling from Theorem 2.4 of \cite{Szni16} that we now recall. We consider $u > 0$ as well as independent $\wt{\varphi}$ (Gaussian free field on $\wt{E}$) and $(\wt{\ell}_{z,u})_{z \in \wt{E}}$ (field of local times of random interlacements at level $u$ on $\wt{E}$). We define (see (\ref{1.6}) for notation):
\begin{equation}\label{1.12}
\begin{array}{l}
\mbox{$\cJ =$ the union of the connected components of $\{2 \ell_{\cdot,u} + \wt{\varphi}_.^2 > 0\}$}
\\
\mbox{intersecting $\wt{\cI}^u$}.
\end{array}
\end{equation}
Then, see (2.26) of \cite{Szni16}, $\partial \cJ$ is a locally finite subset of $\wt{E}$, and we define
\begin{equation}\label{1.13}
\left\{ \begin{split}
\cC'_\infty & = \cJ \cup \partial \cJ, \;\mbox{and}
\\
\wt{\eta}_z & = \big(\sqrt{2u} - \sqrt{2 \wt{\ell}_{z,u} + \wt{\varphi}^2_z}\,\big) \;1\{z \in \cC'_\infty\} + (\wt{\varphi}_z + \sqrt{2u}) \; 1\{z \notin \cC'_\infty\}, \;\mbox{for $z \in \wt{E}$}.
\end{split}\right.
\end{equation}

\n
Then, Theorem 2.4 of \cite{Szni16} states that $\wt{\eta}_z$ is a continuous function of $z \in \wt{E}$, taking the value $\sqrt{2u}$ on $\partial \cJ$, and $(\wt{\eta}_z)_{z \in \wt{E}}$ is distributed as the Gaussian free field on $\wt{E}$. Thus, considering the random open set where the Gaussian free field is above $\sqrt{2u}$, one has
\begin{equation}\label{1.14}
\begin{array}{l}
\mbox{$\{\wt{\varphi} >\sqrt{2u}\} \stackrel{\rm law}{=} \{\wt{\eta} > \sqrt{2u}\} \stackrel{(\ref{1.13})}{=}$ the union of connected components of}
\\
\mbox{$\{\wt{\varphi} >0 \}$ that do not intersect $\wt{\cI}^u$}.
\end{array}
\end{equation}

\n
We will use (\ref{1.14}) with the special choices $u = \frac{1}{2}\, (a + \rho)^2$ and $u = \frac{1}{2} \,a^2$ (when $a > 0$). Thus, if $\wt{\cI}_1\, \!\!\! ^{\frac{a^2}{2}}$(understood as the empty set if $a=0$) and $\wt{\cI}_2\, \!\!\! ^{\ar}$ are independent realizations of the random interlacements on $\wt{E}$ at respective levels $\frac{a^2}{2}$ and $\ar$, then $\wt{\cI}_1\, \!\!\! ^{\frac{a^2}{2}} \cup \wt{\cI}_2\, \!\!\! ^{\ar}$ is distributed as $\wt{\cI}^{\frac{1}{2} \,(a + \rho)^2}$. By (\ref{1.14}) with $u = \frac{1}{2} \, (a + \rho)^2$, we find that:
\begin{equation}\label{1.15}
\begin{array}{rl}
\{\wt{\varphi} >a + \rho\}   \stackrel{\rm law}{=}  & \mbox{the union of connected components of 
$\{\wt{\varphi} > 0\}$}
\\[-0.5ex]
&  \mbox{that do not intersect $\wt{\cI}_1\, \!\!\! ^{\frac{a^2}{2}}$ or $\wt{\cI}_2^{\ar}$}
\\[2ex]
 = \;  &\mbox{the union of connected components of $\{\wt{\varphi} >0\}$}
\\[-1ex]
& \mbox{not intersecting $\wt{\cI}_1\, \!\!\! ^{\frac{a^2}{2}}$ that do not intersect $\wt{\cI}_2^{\ar}$}
\\[2ex]
&\hspace{-1.6cm} \underset{(\ref{1.14}) \, \rm with \, u = \frac{a^2}{2}}{\stackrel{\rm law}{=}} \mbox{the union of connected components of $\{\wt{\varphi} >a\}$}
\\[-2ex]
&\qquad  \!\! \mbox{that do not intersect $\wt{\cI}^{\ar}$}
\end{array}
\end{equation}

\n
(in the last step (\ref{1.14}) is only needed when $a > 0$). This proves (\ref{1.9}).

\smallskip
Taking the trace on $E$ of the identity (\ref{1.9}) immediately yields (\ref{1.10}). We now turn to the proof of (\ref{1.11}). By (\ref{1.10}) we know that $\{\varphi > a + \rho\}$ has the same law as 
$\{\varphi > a\} \backslash \{$the union of connected components of $\{\wt{\varphi} > a\}$ that intersect $\wt{\cI}^{\ar}\} \subseteq \{\varphi > a\}  \backslash \{$the union of connected components of $\{\wt{\varphi} > a\}$ that intersect $\cI^{\ar}\}$ (where $\cI^{\ar}$ stands for the intersection of $\wt{\cI}^{\ar}$ with $E$). Observe that the probability that a bridge in length $(2c_{x,y})^{-1}$, with boundary values $\varphi_x$ and $\varphi_y$, of a variance $2$ Brownian motion does not get below level $a$ equals $1-e^{-2c_{x,y}(\varphi_x-a)_+ (\varphi_y - a)_+}$, see for instance Lemma 2.1 of \cite{DrewPrevRodr}. Hence, by (\ref{1.3}) the above subset of $E$ is distributed as
\begin{align*}
\{\varphi > a\} \backslash \{& \mbox{the union of connected components of $\cI^{\ar}$ for the conditional on $\varphi$ }
\\[-0.5ex]
&\mbox{independent edge percolation on $E$ with success probability}
\\[-0.5ex]
&\mbox{$1-e^{-2c_{x,y}(\varphi_x-a)_+ (\varphi_y - a)_+}$ for each edge $x \sim y$ in $E\}$}.
\end{align*}

\n
This concludes the proof of (\ref{1.11}) and hence of Theorem \ref{theo1.1}.
\end{proof}

We will now give a simple minded consequence of Theorem \ref{theo1.1} showing the monotonicity property of the function $\tau$ introduced in (\ref{0.1}) along a family of arcs of parabolas in the upper quadrant. We recall the assumptions (\ref{1.4}), (\ref{1.5}). 

\begin{corollary}\label{cor1.2} (monotonicity property of $\tau$)

\smallskip\n
The function $\tau$ is non-increasing in $(u,a)$. However,
\begin{equation}\label{1.16}
\mbox{for all $h \ge 0$, the map $u \in [0,\frac{h^2}{2}] \rightarrow \tau\big(u, \sqrt{h^2 - 2u} \,\big) \in \IR_+$ is non-decreasing.}
\end{equation}
\end{corollary}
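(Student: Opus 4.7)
The non-increasing property of $\tau$ in $(u,a)$ follows from the fact that the event $\{x_0 \overset{A}{\leftrightarrow} \infty\}$ is increasing in $A$, combined with the standard monotone coupling of random interlacements (realize $\cI^u$ via a Poisson point process of trajectories so that $u_1 \leq u_2$ implies $\cI^{u_1} \subseteq \cI^{u_2}$, and hence $\cV^{u_1} \supseteq \cV^{u_2}$) together with the trivial pointwise monotonicity $\{\varphi > a'\} \subseteq \{\varphi > a\}$ for $a \leq a'$.

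For (\ref{1.16}), fix $h \ge 0$ and $0 \le u_1 \le u_2 \le h^2/2$, set $a_i = \sqrt{h^2 - 2u_i}$ so that $a_1 \ge a_2 \ge 0$, and put $\rho = a_1 - a_2$. The case $\rho = 0$ is trivial; otherwise $\rho > 0$, and the algebraic identity
\begin{equation*}
a_2 \rho + \tfrac{\rho^2}{2} \;=\; \tfrac{1}{2}(a_1^2 - a_2^2) \;=\; u_2 - u_1
\end{equation*}
makes the level of the interlacement appearing in (\ref{1.11}) match exactly the gap $u_2 - u_1$. This is the one computation that drives the entire argument.

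I then apply Theorem \ref{theo1.1}, specifically (\ref{1.11}), with $a = a_2$ and this $\rho$: on a suitable probability space carrying a GFF $\varphi$ and an independent interlacement $\cI^{u_2 - u_1}$ on $E$ (together with the conditional-on-$\varphi$ edge percolation), there is a coupling in which
\begin{equation*}
\{\varphi > a_1\} \;\subseteq\; \{\varphi > a_2\} \setminus D,
\end{equation*}
where $D$ is the union of the connected components, in the edge-percolated graph on $E$, that meet $\cI^{u_2-u_1}$. In particular $D \supseteq \cI^{u_2-u_1} \cap E$.

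To close, I enlarge the space by an independent interlacement $\cI^{u_1}$ and set $\cI^{u_2} := \cI^{u_1} \cup \cI^{u_2-u_1}$, so that by additivity of random interlacements (Poisson superposition) $\cI^{u_2}$ has its correct marginal law and remains independent of $\varphi$; writing $\cV^{u_i} = E \setminus \cI^{u_i}$, intersection with $\cV^{u_1}$ yields
\begin{equation*}
\cV^{u_1} \cap \{\varphi > a_1\} \;\subseteq\; \cV^{u_1} \cap \bigl(\{\varphi > a_2\} \setminus D\bigr) \;\subseteq\; \cV^{u_1} \cap \bigl(\{\varphi > a_2\} \setminus \cI^{u_2-u_1}\bigr) \;=\; \cV^{u_2} \cap \{\varphi > a_2\}.
\end{equation*}
Since $\{x_0 \overset{\cdot}{\leftrightarrow} \infty\}$ is an increasing event and the marginal laws match those appearing in $\tau(u_1,a_1)$ and $\tau(u_2,a_2)$ respectively, this gives $\tau(u_1,a_1) \le \tau(u_2,a_2)$. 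I do not anticipate a genuine obstacle here: the only subtle bookkeeping is making sure that the enlarged coupling really delivers a pair $(\varphi,\cI^{u_2})$ with the correct product marginal law, which reduces to the Poisson additivity of interlacements.
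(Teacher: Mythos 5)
Your proposal is correct and follows essentially the same route as the paper: both apply the stochastic domination (\ref{1.11}) with $a=a_2$, $\rho=a_1-a_2$, exploit the algebraic identity $a_2\rho+\tfrac{\rho^2}{2}=u_2-u_1$, intersect with an independent $\cV^{u_1}$, and absorb the extra interlacement into $\cV^{u_2}$ via Poisson additivity. The only cosmetic difference is that the paper first records the intermediate inequality $\tau(u,a+\rho)\le\tau(u+a\rho+\tfrac{\rho^2}{2},a)$ as a standalone statement (\ref{1.18}) before specializing, while you fold the two steps together.
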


\begin{proof}
The first statement is plain from (\ref{0.1}). As for the second claim, consider $u \ge 0$, $a \ge 0$, $\rho >0$ as well as independent realizations $\cV^u, \cV_1^{\ar}$, and $\varphi$ of the respective vacant sets at levels $u$ and $\ar$ of the random interlacements on $E$, and of the Gaussian free field on $E$. By (\ref{1.11}) we find that
\begin{equation}\label{1.17}
\begin{array}{l}
\mbox{$\cV^u \cap \{\varphi > a + \rho\}$ is stochastically dominated by}
\\
\cV^u \cap \{\varphi > a\} \cap \cV_1^{\ar} \stackrel{\rm law}{=} \cV^{u + \ar} \cap \{\varphi >a\}.
\end{array}
\end{equation}

\n
Keeping in mind the definition of $\tau$ in (\ref{0.1}), this yields the inequality 
\begin{equation}\label{1.18}
\tau(u, a+ \rho) \le \tau\big(u + \textstyle \ar, a\big), \; \mbox{for $u,a, \rho \ge 0$}.
\end{equation}

\n
Now, given $h \ge 0$, $0 \le u' \le u \le \frac{h^2}{2}$, we set $a = \sqrt{h^2 - 2u}$, $a' = \sqrt{h^2 - 2u'} = a + \rho \ge a$, so that $\frac{h^2}{2} = u + \frac{a^2}{2} = u' + \frac{a'^2}{2} = u' + \frac{a^2}{2} + \ar$. It now follows from (\ref{1.18}) that
\begin{equation*}
\tau \big(u', \sqrt{h^2-2u'}\,\big) = \tau(u',a + \rho) \stackrel{(\ref{1.18})}{\le} \tau \big(u' + \textstyle \ar, a\big) = \big(u, \sqrt{h^2-2u}\,\big).
\end{equation*}
This proves our claim (\ref{1.16}).
\end{proof}

\begin{remark}\label{rem1.3} \rm 1) Letting $S_n(x_0)$ stand for the sphere in $E$ with center $x_0$ and radius $n$ for the graph distance on $E$, one can consider
\begin{equation}\label{1.19}
\mbox{$\tau_n(u,a) = \IP^I \otimes \IP^G \big[x_0 \overset{\cV^u \cap \{\varphi > a\}}{\mbox{\large $\longleftrightarrow$}} \;S_n(x_0)\big]$,  for $u \ge 0$, $a \in \IR$, $n \ge 1$.}
\end{equation}

\n
Then $\lim_{n} \tau_n(u,a) = \tau(u,a)$ and the same proof as in Corollary \ref{cor1.2} yields that
\begin{equation}\label{1.20}
\mbox{$\tau_n(u,a + \rho) \le \tau_n\big(u + \ar, a\big)$, for $u,a, \rho \ge 0, n \ge 1$},
\end{equation}

\n
which formally leads to the differential inequality in the spirit of \cite{AizeGrim91},
\begin{equation}\label{1.21}
\frac{1}{a} \; \frac{\partial \tau_n}{\partial a} \le \frac{\partial \tau_n}{\partial  u} \; (\le 0), \;\mbox{for $u,a > 0$}.
\end{equation}
The monotonicity property
\begin{equation}\label{1.22}
\begin{array}{l}
\mbox{for all $h \ge 0$, $n \ge 1$ the map $u \in \big[0,\frac{h^2}{2}\big] \rightarrow \tau_n\big(u, \sqrt{h^2 - 2u}\,\big) \ge 0$ is}
\\
\mbox{non-decreasing},
\end{array}
\end{equation}

\n
holds as well and can be viewed as an integrated version of (\ref{1.21}). It would be of interest to know whether some differential inequality can also be established when $a$ is negative.

\medskip\n
2) The positivity or the vanishing of $\tau(u,a)$ in (\ref{0.1}) does not depend on the choice of the base point $x_0$ in $E$. This fact can be seen with the successive application of the Harris-FKG inequality satisfied by $\cV^u$, see Theorem 3.1 of \cite{Teix09b}, followed by the Harris-FKG inequality for the Gaussian free field (and the use of Fubini's theorem).

\medskip\n
3) The above Corollary \ref{cor1.2} contains interesting information on the loci where $\tau$ is positive or vanishes. For instance, it implies that (see Figure 1 for the case of the regular tree):
\begin{equation}\label{1.23}
\mbox{$\tau >0$ on} \; \big\{(u,a) \in \IR_+ \times \IR_+ ; u + \textstyle \frac{a^2}{2} < \frac{h^2_*}{2}\big\}
\end{equation}
(since $\tau(u,a) \ge \tau (0, \sqrt{2u + a^2} \,) > 0$ on that set), and that
\begin{equation}\label{1.24}
\mbox{$\tau =0$ on} \; \big\{(u,a) \in \IR_+ \times \IR_+ ; u + \textstyle \frac{a^2}{2} > u_*\big\}
\end{equation}
(since $\tau(u,a) \le \tau (u + \frac{a^2}{2},0) = 0$ on that set).

\smallskip
Incidentally, (\ref{1.23}), (\ref{1.24}) yield some sufficient conditions to prove that $h_* < \sqrt{2u_*}$:
%
\begin{equation}\label{1.25}
\mbox{if $\tau(u,a) > 0$ for some $u,a \ge 0$ with $u + \frac{a^2}{2} > \frac{h^2_*}{2}$, then $h_* < \sqrt{2u_*}$}
\end{equation}
(otherwise (\ref{1.24}) would be contradicted), and
%
\begin{equation}\label{1.26}
\mbox{if $\tau(u,a) = 0$ for some $u,a \ge 0$ with $u + \frac{a^2}{2} < u_*$, then $h_* < \sqrt{2u_*}$}
\end{equation}
(otherwise (\ref{1.23}) would be contradicted). The above claim (\ref{1.24}) for the choice $a=0$, was for instance used in the proof of Theorem 3.4 of \cite{AbacSzni18}, to show that $h_* < \sqrt{2u_*}$ for a broad enough class of transient trees. One may wonder whether (\ref{1.23}) or (\ref{1.24}) could be helpful to establish $h_* < \sqrt{2u_*}$ in the case of $\IZ^d$, $d \ge 3$. \hfill $\square$
\end{remark}

\section{An application to the regular tree}
\setcounter{equation}{0}

We now consider the case when $T$ is a $(d+1)$-regular tree endowed with unit weights (we assume $d \ge 2$, and $T$ plays the role of $E$ in the previous section). We will now apply the stochastic domination (\ref{1.11}) of Theorem \ref{theo1.1} to prove the key Theorem \ref{theo2.1} and obtain as a consequence Corollary \ref{cor2.2} that proves the strict monotonicity property (\ref{0.9}) of the function $\lambda$ from (\ref{0.8}). In the next section Corollary \ref{cor2.2} will provide meaningful information about the region where $\tau$ vanishes or is positive, see Corollary \ref{cor3.2}.

We keep the notation from (\ref{0.4}) - (\ref{0.8}). One knows from Proposition 3.1 of \cite{Szni16} that
\begin{equation}\label{2.1}
\begin{array}{l}
\mbox{for all $h \in \IR$, $\lambda_h$ is a simple eigenvalue of $L_h$ and there is a unique}
\\
\mbox{function $\chi_h \ge 0$, with unit $L^2(\nu)$-norm, continuous and positive on}
\\
\mbox{$[h, + \infty)$, vanishing on $(-\infty,h)$, which is a corresponding eigenfunction.}
\\
\mbox{Moreover, $\chi_h$ belongs to all $L^p(\nu)$, $p \ge 1$}.
\end{array}
\end{equation}
The main result of this section is the following

\begin{theorem}\label{theo2.1}
For $a \ge 0$ and $\rho > 0$ one has
\begin{equation}\label{2.2}
\lambda_{a + \rho} < \lambda_a \,e^{-(\ar)\frac{(d-1)^2}{d}} .
\end{equation}
\end{theorem}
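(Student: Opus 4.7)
The plan is to apply the stochastic domination (\ref{1.11}) of Theorem~\ref{theo1.1} on the $(d+1)$-regular tree $T$ with unit weights, translate it into a spectral comparison on $L^2(\nu)$ via a multi-type Galton--Watson analysis, and promote the weak inequality thus obtained to a strict one via a Perron--Frobenius argument.

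First I would specialize (\ref{1.11}) with $u = \ar$ to obtain $\{\varphi > a+\rho\} \preceq \{\varphi > a\} \setminus C$, where $C$ is the union of connected components, in the conditional-on-$\varphi$ Bernoulli edge percolation on $T$ with per-edge success probability $1 - e^{-2(\varphi_x - a)_+(\varphi_y - a)_+}$, that meet an independent random interlacement $\cI^u$. On $T$ the cluster of the root $x_0$ in any random subset admitting a multi-type Galton--Watson decomposition is governed by a mean operator acting on $L^2(\nu)$ whose spectral radius controls supercriticality. For $\{\varphi > a+\rho\}$ this mean operator is exactly $L_{a+\rho}$ of (\ref{0.5}), with critical parameter $\lambda_{a+\rho}$. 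For the dominating set $\{\varphi > a\} \setminus C$ a direct analysis yields the mean operator $\hat L$ on $L^2(\nu)$ whose kernel combines the ingredients of $L_a$ with the per-edge ``bridge above $a$'' factor $1 - e^{-2(x-a)_+(y-a)_+}$ inherited from (\ref{1.11}), and the per-edge interlacement survival factor $e^{-u(d-1)^2/d}$ --- the very factor behind the characterization (\ref{0.3}) of $u_*$ via the vacant-cluster Galton--Watson analysis of Section~5 of \cite{Teix09b}. In the notation of (\ref{0.5}), writing $Z = xe^{-1/d} + \sqrt{1-e^{-2/d}}\,Y$,
$$(\hat L f)(x) = 1\{x > a\}\, e^{-u (d-1)^2/d}\, d\, E^Y\!\bigl[1\{Z > a\}\, (1 - e^{-2(x-a)_+(Z-a)_+})\, f(Z)\bigr].$$

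Second, since expected cluster sizes at depth $n$ on $T$ are monotone in the underlying random subset and grow like the $n$-th power of the spectral radius of the governing mean operator, the stochastic set-domination delivers the weak inequality $\lambda_{a+\rho} \le \|\hat L\|$. On the other hand, the kernel of $\hat L$ is pointwise bounded above by that of $e^{-u(d-1)^2/d} L_a$, strictly on $\{(x,y) : x > a, y > a\}$ (the defect is proportional to the strictly positive kernel $e^{-2(x-a)(y-a)}$ on this set of positive $\nu \otimes \nu$-measure). By (\ref{2.1}), $L_a$ is compact and self-adjoint on $L^2(\nu)$ with simple top eigenvalue $\lambda_a$ and a strictly positive top eigenfunction $\chi_a$ on $[a, +\infty)$. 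Testing $\chi_a$ against $\hat L$ and invoking the Perron--Frobenius / Krein--Rutman principle, the strict kernel inequality passes to the operator norm:
$$\|\hat L\| \,<\, e^{-u(d-1)^2/d}\, \lambda_a \,=\, \lambda_a\, e^{-u(d-1)^2/d}.$$
Chaining the two inequalities gives $\lambda_{a+\rho} \le \|\hat L\| < \lambda_a\, e^{-u(d-1)^2/d}$, which is (\ref{2.2}).

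The principal obstacle is the identification of $\hat L$ in the first step as the mean operator governing the cluster of $x_0$ in $\{\varphi > a\} \setminus C$ on $T$: the delicate point is to correctly extract the per-edge interlacement survival factor $e^{-u(d-1)^2/d}$ from the inherently global condition that an edge-percolation component in $\{\varphi > a\}$ not meet $\cI^u$. The Perron--Frobenius step is comparatively routine once $\hat L$ is at hand, relying only on the positivity and simplicity of the top eigenpair of $L_a$ together with the strict positivity of the defect kernel on $(a, +\infty)^2$.
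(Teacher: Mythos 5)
The high-level strategy is the right one, and is in fact the one the paper follows: use the stochastic domination (\ref{1.11}) to bound $\IP^G[x_0 \overset{\varphi > a+\rho}{\longleftrightarrow} x]$ from above by a quantity encoded by a self-adjoint operator on $L^2(\nu)$, extract its operator norm by iterating the Markov property, and then obtain strictness by exploiting the simple, positive top eigenpair $(\lambda_a,\chi_a)$ of $L_a$ together with a factor that is everywhere strictly below $1$. Your Perron--Frobenius step is precisely what the paper does in (\ref{2.16}).

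The gap --- and you correctly identified it as your principal obstacle --- is the formula for $\hat L$. As written, your kernel multiplies the kernel of $L_a$ by the factor $e^{-u(d-1)^2/d}\bigl(1 - e^{-2(x-a)_+(Z-a)_+}\bigr)$, where $(x,Z)$ parametrize the two endpoints of a geodesic edge in $T$. This places the ``bridge stays above $a$'' factor from (\ref{1.11}) on the edges of the geodesic $[x_0,x]$ itself, which is not where it belongs. The dominating set $\{\varphi>a\}\setminus C$ is a subset of $E$, and connectivity of $x_0$ to $x$ within it is plain graph connectivity: the edge-percolation state of the path edges is irrelevant. What matters is that each $y\in[x_0,x]$ be outside $C$, and a necessary (though not sufficient) condition is that $y\notin\cI^u$ and that for each off-path neighbour $z\in\cI^u$ of $y$ the edge $\{y,z\}$ be \emph{closed}, contributing the factor $e^{-2(\varphi_y-a)_+(\varphi_z-a)_+}$ --- not $1-e^{-2(\cdot)}$. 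Averaging this over $\varphi_z$ and over whether $z\in\cI^u$ yields the function $V(b)=p+(1-p)Q_{1/d}\bigl(e^{-2(\cdot-a)_+(b-a)_+}\bigr)(b)$ with $p=e^{-(\ar)(d-1)^2/d}$, and the relevant operator is $\wt L_a=\sqrt V\,L_a\sqrt V$, giving $\lambda_{a+\rho}\le p\,\wt\lambda_a$. With your $\hat L$, the suppressing factor $1-e^{-2(x-a)_+(Z-a)_+}<1$ makes the operator smaller rather than larger, so $\lambda_{a+\rho}\le\|\hat L\|$ is not justified by the domination and the chain of inequalities cannot be closed. A further point: the paper never attempts to write down an \emph{exact} multi-type Galton--Watson mean operator for the cluster of $x_0$ in $\{\varphi>a\}\setminus C$ --- the condition $y\notin C$ is non-local and makes an exact Markovian decomposition problematic --- and instead bounds the cluster probability from above using only the first-neighbour necessary conditions (see the right-hand side of (\ref{2.6}) and the passage from (\ref{2.8}) to (\ref{2.10}), where factors $V\le 1$ are also dropped). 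Once you replace $\hat L$ by $\wt L_a$ and separate out the interlacement factor $p^n$, your argument lines up with the paper's proof.
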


The inequality (\ref{2.2}) is a strong reinforcement of the inequality $\lambda_\rho \le \lambda_0 \,e^{-\frac{\rho^2}{2} \;\frac{(d-1)^2}{d}}$, for all $\rho \ge 0$, established in Theorem 4.3 of \cite{Szni16} (with the help of the coupling constructed in that article and recalled below (\ref{1.13})). Our proof of (\ref{2.2}) will be based on the stochastic domination (\ref{1.11}) of Theorem \ref{theo1.1} (which also relies on the above mentioned coupling). A direct analytical proof of (\ref{2.2}) is unknown to the author. Let us first state a consequence of Theorem \ref{theo2.1}.

\begin{corollary}\label{cor2.2} (strict monotonicity property of $\lambda$)

\smallskip\n
The function $\lambda$ from (\ref{0.8}) is strictly decreasing in $(u,a)$. However, for any $h > 0$, 
\begin{equation}\label{2.3}
\begin{array}{l}
\mbox{the map $u \in [0,\frac{h^2}{2}] \longrightarrow \lambda (u, \sqrt{h^2 - 2u})$ is strictly increasing and}
\\
\mbox{induces an homeomorphism between $[0,\frac{h^2}{2}]$ and $[\lambda_h, \lambda_0 \,e^{-\frac{h^2}{2} \,\frac{(d-1)^2}{d}}]$}.
\end{array}
\end{equation}
\end{corollary}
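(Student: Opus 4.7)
The plan is to derive Corollary \ref{cor2.2} directly from Theorem \ref{theo2.1} by the same change of variables used in Corollary \ref{cor1.2} to convert the stochastic domination into a monotonicity statement along parabolic arcs.

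First I would dispatch the strict monotonicity of $\lambda$ in $(u,a)$: the factor $\lambda_a$ is strictly decreasing in $a$ by (\ref{0.6}), and the factor $e^{-u(d-1)^2/d}$ is strictly decreasing in $u$ since $(d-1)^2/d > 0$ (as $d\ge 2$); together they give strict decrease of $\lambda$ whenever either coordinate strictly increases.

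For (\ref{2.3}), fix $h > 0$ and take $0 \le u' < u \le h^2/2$. Set $a = \sqrt{h^2 - 2u}$ and $a' = \sqrt{h^2 - 2u'}$, so that $a' > a \ge 0$; writing $\rho = a' - a > 0$, a direct computation gives $u - u' = a\rho + \rho^2/2 = \ar$. Then
\begin{equation*}
\frac{\lambda(u', a')}{\lambda(u, a)} = \frac{\lambda_{a+\rho}}{\lambda_a} \cdot e^{(u-u')\frac{(d-1)^2}{d}} = \frac{\lambda_{a+\rho}}{\lambda_a} \cdot e^{(\ar)\frac{(d-1)^2}{d}},
\end{equation*}
and Theorem \ref{theo2.1} tells us precisely that this ratio is strictly less than $1$, proving strict monotonicity.

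Finally, for the homeomorphism claim I would note that $h \mapsto \lambda_h$ is continuous by (\ref{0.6}), so $u \mapsto \lambda(u, \sqrt{h^2-2u})$ is a composition of continuous maps on $[0, h^2/2]$; being strictly increasing and continuous on a compact interval it is a homeomorphism onto its image, and by the intermediate value theorem the image is the interval between the endpoint values $\lambda(0,h) = \lambda_h$ and $\lambda(h^2/2, 0) = \lambda_0 \, e^{-\frac{h^2}{2}\frac{(d-1)^2}{d}}$. There is essentially no obstacle here since Theorem \ref{theo2.1} carries all the analytic content; the only thing to watch is the boundary case $u = h^2/2$, where $a = 0$, but this is covered because Theorem \ref{theo2.1} is stated for $a \ge 0$.
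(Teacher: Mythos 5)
Your proposal is correct and follows essentially the same route as the paper: the same parabolic change of variables $a=\sqrt{h^2-2u}$, $a'=\sqrt{h^2-2u'}$, the identity $u-u' = a\rho+\rho^2/2$, and a direct invocation of Theorem \ref{theo2.1}, which you merely phrase as a ratio bound instead of the paper's inequality chain. The remarks on continuity, the intermediate value theorem, and the boundary case $a=0$ are all in order and match the (terser) argument in the paper.
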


\smallskip\n
{\it Proof of Corollary \ref{cor2.2} (assuming Theorem \ref{theo2.1}).} The first statement is plain from (\ref{0.8}), (\ref{0.6}). For the second statement (\ref{2.3}), note that the map in (\ref{2.3}) is continuous, and we only need to show that it is strictly increasing. Consider $u' < u$ in $[0,\frac{h^2}{2}]$ and as below (\ref{1.8}) set $a = \sqrt{h^2 - 2u}$, $a' = \sqrt{h^2 - 2u'}$, and $\rho' = a' - a > 0$, so that $u + \frac{a^2}{2} = u' + \frac{a'^2}{2} = \frac{h^2}{2}$. Then we have
\begin{equation}\label{2.4}
\begin{split}
\lambda(u', a')  = &\; \lambda_{a + \rho} \,e^{-u' \frac{(d-1)^2}{d}} \stackrel{(\ref{2.2})}{<} \lambda_a \,e^{-(\ar + u') \frac{(d-1)^2}{d}}= \lambda_a\, e^{-(\frac{a'^2}{2} + u' - \frac{a^2}{2})\frac{(d-1)^2}{d}}
\\
=&\;\lambda_a \, e^{-(\frac{h^2}{2} - \frac{a^2}{2})\frac{(d-1)^2}{d}} = \lambda (u,a).
\end{split}
\end{equation}
This proves our claim (\ref{2.3}). \hfill $\square$

\medskip\n
{\it Proof of Theorem \ref{theo2.1}}. We consider an arbitrary base point $x_0$ in $T$, $n \ge 1$ and $x$ in $T$ at graph-distance $n$ from $x_0$. We denote by $[x_0,x]$ the geodesic segment in $T$ between $x_0$ and $x$. We consider $a \ge 0$, $\rho > 0$, and will bound first from below, and then from above, the quantity $\IP^G[x_0 \overset{\varphi > a + \rho}{\mbox{\large $\longleftrightarrow$}} x]$. Our claim (\ref{2.2}) will follow after letting $n$ tend to infinity.

We begin with the lower bound. An iterated application of the Markov property, see (3.13) of \cite{Szni16}, yields (with $ \langle \cdot, \cdot\rangle_\nu$ the scalar product in $L^2(\nu)$):
\begin{equation}\label{2.5}
\begin{split}
\IP^G \big[x_0 \overset{\varphi > a + \rho}{\mbox{\large $\longleftrightarrow$}} x\big] & = \IP^G\big[\varphi(y) > a + \rho \;\mbox{for all} \; y \in [x_0,x]\big] = \big\langle 1, \big(\textstyle\frac{1}{d} \;L_{a + \rho}\big)^n 1\big\rangle_\nu
\\
&\hspace{-1cm}\stackrel{\rm spectral \, expansion}{\ge} \big(\textstyle\frac{1}{d} \;\lambda_{a+\rho}\big)^n \langle \chi_{a + \rho}, 1\rangle^2_\nu.
\end{split}
\end{equation}

\n
We now turn to the upper bound. We denote by $\wt{\IP}$ the probability jointly governing $\varphi$, the Gaussian free field on $T$, and conditionally on $\varphi$ an independent Bernoulli edge percolation on $T$, where each edge $x \sim y$ in $T$ is open with probability $1 - e^{-2(\varphi_x - a)_+(\varphi_y - a)_+}$. The stochastic domination (\ref{1.11}) crucially yields the upper bound:
\begin{equation}\label{2.6}
\begin{array}{l}
\IP^G \big[x_0 \overset{\varphi > a + \rho}{\mbox{\large $\longleftrightarrow$}} x\big] \le \wt{\IP} \otimes \IP^I  \big[x_0 \overset{\varphi > a}{\mbox{\large $\longleftrightarrow$}} x , [x_0,x] \subseteq \cV^{\ar} \;\mbox{and for all $z \in \cI^{\ar}$}
\\[0.5ex]
\mbox{neighboring $[x_0,x]$, the edge between $z$ and $[x_0,x]$ is closed$\big]$}.
\end{array}
\end{equation}

\n
Recall $\sigma^2$ from (\ref{0.4}). It is convenient to introduce the notation
\begin{equation}\label{2.7}
p_0 = e^{-(\ar) \sigma^{-2}}, \, p= e^{-(\ar)\frac{(d-1)^2}{d}} .
\end{equation}

\n
These two quantities respectively equal the probability that $x_0$ belongs to $\cV^{\ar}$, and for $x \not= x_0$ that there is no trajectory in the interlacement at level $\ar$ having $x$ as its point of minimum distance to $x_0$. All these events are known to be independent, see Section 5 of \cite{Teix09b}, in particular (5.7), (5.9) of this reference (note that $d+1$ plays the role of $d$ in \cite{Teix09b} and here the weights are unit). The right-hand side of (\ref{2.6}) then equals
\begin{equation}\label{2.8}
p_0 \,p^n \,\IE^G\big[\varphi(y) > a \;\mbox{for all $y \in [x_0,x]$} , \textstyle \prod\limits_{y \in [x_0,x]}  \prod\limits_{\mbox{\scriptsize $z \sim y$}, \atop z \notin [x_0,x]} \!\! \big(p+ (1-p) \,e^{-2(\varphi_z - a)_+ (\varphi_y -a)_+}\big)\big].
\end{equation}

\n
With $Q_t, t \ge 0$, from below (\ref{0.5}), we introduce the measurable function from $\IR$ to $(0,1)$:
\begin{equation}\label{2.9}
V(b) = p+(1-p) \,Q_{\frac{1}{d}} (e^{-2(\cdot\, - a)_+(b-a)_+}) (b).
\end{equation}

\n
By the Markov property we then find that the expression in (\ref{2.8}) is bounded above by:
\begin{equation}\label{2.10}
p_0\,p^n \, \IE^G\big[\varphi(y) > a \;\mbox{for all} \;y \in [x_0,x], \textstyle\prod\limits_{y \in [x_0,x]} V(\varphi_y)\big].
\end{equation}

\n
We now introduce the self-adjoint Hilbert-Schmidt operator on $L^2(\nu)$ (see (\ref{0.5})):
\begin{equation}\label{2.11}
\wt{L}_h = \sqrt{V} \, L_h \sqrt{V}, \;\mbox{for $h \in \IR$} ,
\end{equation}
and denote its operator norm by
\begin{equation}\label{2.12}
\wt{\lambda}_h = \| \wt{L}_h\|_{L^2(\nu) \rightarrow L^2(\nu)},  \;\mbox{for $h \in \IR$} .
\end{equation}

\n
Keeping in mind that (\ref{2.8}) bounds $\IP^G [x_0 \overset{\varphi > a + \rho}{\mbox{\large $\longleftrightarrow$}} x]$ from above, we obtain after repeated application of the Markov property as in (\ref{2.5}), that
\begin{equation}\label{2.13}
\IP^G [x_0 \overset{\varphi > a + \rho}{\mbox{\large $\longleftrightarrow$}} x] \le p_0 \, p^n \big\langle \sqrt{V}, \big(\textstyle\frac{1}{d} \;\wt{L}_a\big)^n \sqrt{V} \big\rangle_\nu \le p_0 \, p^n \big(\textstyle \frac{1}{d} \;\wt{\lambda}_a\big)^n \|\sqrt{V} \|^2_{L^2(\nu)}.
\end{equation}

\n
Comparing the lower bound (\ref{2.5}) and the upper bound (\ref{2.13}) (note that $\langle \chi_{a+\rho}, 1\rangle_\nu > 0$ by (\ref{2.1})), we find after taking $n$-th roots and letting $n$ tend to infinity that
\begin{equation}\label{2.14}
\lambda_{a+ \rho} \le \wt{\lambda}_a \, p \stackrel{(\ref{2.7})}{=} \wt{\lambda}_a \,e^{-(\ar)\frac{(d-1)^2}{d}}.
\end{equation}

\n
The proof of (\ref{2.2}), i.e.~of Theorem \ref{theo2.1}, will thus be completed once we show that
\begin{equation}\label{2.15}
\wt{\lambda}_a < \lambda_a .
\end{equation}

\n
We now prove (\ref{2.15}). We denote by $\gamma_a < \lambda_a$ the operator norm of $L_a$ restricted to the orthogonal subspace of $\chi_a$ in $L^2(\nu)$, see (\ref{2.1}). Thus, for $f \in L^2(\nu)$ with unit norm, $h = \sqrt{V} f - \langle \chi_a, \sqrt{V}  f\rangle_\nu \, \chi_a$ is orthogonal to $\chi_a$ in $L^2(\nu)$, and we have
\begin{equation}\label{2.16}
\begin{split}
\langle f, \wt{L}_a \,f\rangle_\nu  &= \;  \langle \sqrt{V} f, L_a \sqrt{V} \, f\rangle_\nu \stackrel{(\ref{2.1}), (\ref{0.5})}{=} \lambda_a \,\langle \chi_a, \sqrt{V}f \, \rangle^2_\nu + \langle h, L_a h\rangle_\nu
\\
& \le  \lambda_a \langle \chi_a, \sqrt{V}f\rangle^2_\nu + \gamma_a \|h\|^2_{L^2(\nu)} = (\lambda_a - \gamma_a) \langle \chi_a, \sqrt{V}f\rangle^2_\nu + \gamma_a \|\sqrt{V} f\|^2_{L^2(\nu)}
\\
& \hspace{-1cm} \underset{0 < V < 1}{\stackrel{\mbox{\scriptsize Cauchy-Schwarz}}{\le}} (\lambda_a - \gamma_a) \,\langle \chi^2_a,V\rangle_\nu + \gamma_a \stackrel{0 < V < 1}{<} \lambda_a.
\end{split}
\end{equation}

\n
Taking a supremum over $f$ of unit norm in $L^2(\nu)$ shows that $\wt{\lambda}_a$ is at most equal to $(\lambda_a - \gamma_a) \langle \chi^2_a, V\rangle_\nu + \gamma_a$, and (\ref{2.15}) follows. This concludes the proof of Theorem \ref{theo2.1}. \hfill $\square$

\section{Application to vacant set level set percolation on a regular tree}
\setcounter{equation}{0}

In this section we keep the set-up of the previous section. Theorem \ref{theo3.1} below provides the link between the function $\lambda(u,a)$ from (\ref{0.8}) and the positivity or vanishing of the percolation function $\tau(u,a)$ from (\ref{0.1}). In particular it highlights the role of the curve $\lambda(u,a) = 1$ as a ``critical line'' for the vacant set level set percolation. The strict monotonicity property established in Corollary \ref{cor2.2} is then used in Corollary \ref{cor3.2} to infer some significant properties of the regions where $\tau$ is positive or vanishes.

Incidentally, in the present case of the $(d+1)$-regular tree with unit weights (and $d \ge 2$), the function $\tau$ itself (and not merely its positivity or vanishing) is independent of the choice of the base point $x_0$ (see also Remark \ref{rem1.3} 2)).

\begin{theorem}\label{theo3.1} (link between $\lambda$ and $\tau$) 

\smallskip\n
For $u \ge 0$, $a \in \IR$,
\begin{align}
& \mbox{if $\lambda(u,a) < 1$, then $\tau(u,a) = 0$}, \label{3.1}
\\[1ex]
& \mbox{if $\lambda(u,a) > 1$, then $\tau(u,a) > 0$}. \label{3.2}
\end{align}
\end{theorem}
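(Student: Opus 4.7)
The strategy is to reduce both implications to the sphere-connection probabilities $\tau_n(u,a)$ from (\ref{1.19}), which decrease to $\tau(u,a)$ as $n\to\infty$ (on a tree, the connection from $x_0$ to a set $A$ takes place along a unique geodesic, so the events $\{x_0\overset{A}{\longleftrightarrow}S_n(x_0)\}$ decrease to $\{x_0\overset{A}{\longleftrightarrow}\infty\}$). Two structural facts underpin the analysis. First, by iterated application of the GFF Markov property on $T$ as in (\ref{2.5}), for any $x$ at graph-distance $n$ from $x_0$,
\[
\IP^G\big[[x_0,x] \subseteq \{\varphi > a\}\big] = \langle 1,(d^{-1}L_a)^n 1\rangle_\nu.
\]
Second, by the Galton-Watson description of $\cV^u$ on $T$ from Section 5 of \cite{Teix09b} (recalled below (\ref{2.7})), $\IP^I\big[[x_0,x]\subseteq\cV^u\big] = p_0\,p^n$, and at any vertex $z\in\cV^u$ the $\cV^u$-events in disjoint subtrees emanating from $z$ are conditionally independent.

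For (\ref{3.1}) I would argue by a first moment bound. Since $\IP^I$ and $\IP^G$ are independent in the definition of $\tau$, the union bound over the $(d+1)d^{n-1}$ vertices in $S_n(x_0)$, combined with the estimate $\langle 1,(d^{-1}L_a)^n 1\rangle_\nu \le (d^{-1}\lambda_a)^n$ (from Cauchy-Schwarz and self-adjointness of $L_a$ on $L^2(\nu)$), yields
\[
\tau_n(u,a) \le (d+1)\,d^{n-1}\,p_0\,p^n\,(d^{-1}\lambda_a)^n = \frac{(d+1)\,p_0}{d}\,\lambda(u,a)^n.
\]
When $\lambda(u,a)<1$, the right-hand side tends to $0$, giving $\tau(u,a)=0$.

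For (\ref{3.2}) I would run a second moment argument weighted by the positive eigenfunction $\chi_a$ from (\ref{2.1}), setting
\[
W_n = \sum_{x \in S_n(x_0)} \mathbf{1}\big\{[x_0,x] \subseteq \cV^u \cap \{\varphi > a\}\big\}\,\chi_a(\varphi_x).
\]
The eigenvalue identity $L_a\chi_a = \lambda_a\chi_a$ and the independence of $\IP^I,\IP^G$ give $\IE[W_n] = c_1\,\lambda(u,a)^n$ with $c_1 = (d+1)d^{-1}p_0\langle 1,\chi_a\rangle_\nu>0$ (using $\langle 1,\chi_a\rangle_\nu>0$ from (\ref{2.1})). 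For the second moment, I partition pairs $(x,x')\in S_n(x_0)^2$ by the depth $k$ of their common ancestor $z$. The GFF Markov property and the conditional independence of $\cV^u$ across disjoint subtrees emanating from $z$ factorize the per-pair contribution through $\varphi_z$: the eigenfunction property gives $\IE^G\big[\mathbf{1}\{[z,x]\subseteq\{\varphi>a\}\}\chi_a(\varphi_x)\bigm|\varphi_z\big] = (\lambda_a/d)^{n-k}\chi_a(\varphi_z)$, so that the GFF part becomes $(\lambda_a/d)^{2(n-k)}\langle 1,(d^{-1}L_a)^k\chi_a^2\rangle_\nu \le (\lambda_a/d)^{2n-k}\|\chi_a\|_{L^4(\nu)}^2$ (using $\chi_a\in L^4(\nu)$ from (\ref{2.1})), while the interlacement part contributes $p_0\,p^{2n-k}$. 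Multiplying by the number of such pairs ($\sim d^{2n-k}$) and summing over $k=0,\ldots,n$ yields
\[
\IE[W_n^2] \le C\,\lambda(u,a)^{2n}\sum_{k=0}^n\lambda(u,a)^{-k} \le C'\,\lambda(u,a)^{2n}
\]
when $\lambda(u,a)>1$. The Paley-Zygmund inequality then gives $\IP[W_n>0]\ge \IE[W_n]^2/\IE[W_n^2] \ge c>0$ uniformly in $n$; since $W_n>0$ forces the existence of a vertex of $S_n(x_0)$ connected to $x_0$ in $\cV^u\cap\{\varphi>a\}$, we obtain $\tau_n(u,a)\ge c$ for all $n$, and hence $\tau(u,a)>0$ after passing to the limit.

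The main obstacle will be the bookkeeping in the second moment: one must carefully combine the GFF Markov property on $T$ with the conditional independence of $\cV^u$ across disjoint subtrees (from \cite{Teix09b}) so that the per-pair contribution factorizes correctly through the common ancestor, and apply the $L^p$ bounds on $\chi_a$ from (\ref{2.1}) at the right step. The upper bound (\ref{3.1}) is essentially routine, whereas (\ref{3.2}) genuinely requires the positive eigenfunction $\chi_a$ and its moment properties; the proof mirrors the classical supercritical branching-random-walk argument on a tree, with the Perron eigenvalue $\lambda(u,a)$ playing the role of the offspring mean.
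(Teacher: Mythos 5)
Your proof is correct and follows essentially the same route as the paper's: a first-moment (union) bound together with the operator-norm estimate $\langle 1,(d^{-1}L_a)^n 1\rangle_\nu \le (d^{-1}\lambda_a)^n$ for (\ref{3.1}), and a second-moment argument for (\ref{3.2}) built on the eigenfunction-weighted sum over $S_n(x_0)$, partitioned by the depth of the common ancestor and closed off via Paley--Zygmund (the paper phrases this as Cauchy--Schwarz applied to $M_n = M_n\,\mathbf{1}\{x_0 \leftrightarrow S_n(x_0)\}$, which is the same inequality). Your $W_n$ is just the paper's $M_n$ without the $\lambda(u,a)^{-n}$ normalization, and the $\|\chi_a\|_{L^4(\nu)}^2$ you invoke equals the paper's $\|\chi_a^2\|_{L^2(\nu)}$, so the bookkeeping agrees step for step.
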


\begin{proof}
We first prove (\ref{3.1}). For $n \ge 1$, and $x$ in $T$ at graph distance $n$ from the base point $x_0$ in $T$, we obtain by a similar calculation as in (\ref{2.5}) (see (\ref{2.7}) for notation):
\begin{equation}\label{3.3}
\IP^I \otimes \IP^G \big[x_0 \overset{\cV^u \cap \{\varphi > a\}}{\mbox{\large $\longleftrightarrow$}} x\big] = p_0 \,p^n \IP^G \big[x_0 \overset{\varphi > a}{\mbox{\large $\longleftrightarrow$}} x\big] \stackrel{\rm as \; in \,(\ref{2.5})}{\le} p_0 \,p^n \big(\textstyle \frac{1}{d} \;\lambda_a\big)^n.
\end{equation}
Thus with $\tau_n$ as in (\ref{1.19}) we have
\begin{equation}\label{3.4}
\tau_n (u,a) \le (d+1) \,d^{n-1} p_0 \, p^n \big(\textstyle \frac{1}{d} \;\lambda_a\big)^n \underset{(\ref{0.8})}{\stackrel{(\ref{2.7})}{=}} \;\frac{d+1}{d} \;p_0 \, \lambda(u,a)^n.
\end{equation}

\n
Letting $n$ tend to infinity, we see that $\lambda(u,a) < 1$ implies $\tau(u,a) = 0$, and (\ref{3.1}) follows.

\smallskip
We now turn to the proof of (\ref{3.2}). For $n \ge 1, u \ge 0, a \in \IR$, we introduce (see (\ref{1.19}) and (\ref{2.1}) for notation)
\begin{equation}\label{3.5}
M_n = \lambda(u,a)^{-n} \textstyle \sum\limits_{x \in S_n(x_0)} 1 \big\{x_0 \overset{\cV^u \cap \{\varphi > a\}}{\mbox{\large $\longleftrightarrow$}} x\big\} \;\chi_a (\varphi_x) \;(\ge 0).
\end{equation}

\n
We will show that under $\IP^I \otimes \IP^G$ the first moment of $M_n$ is uniformly positive (and in fact constant), and that the second moment is uniformly bounded. We begin with the calculation of the first moment and recall the notation (\ref{2.7}). We have
\begin{equation}\label{3.6}
\IE^I \otimes \IE^G [M_n] = \lambda(u,a)^{-n} p_0\,p^n \textstyle  \sum\limits_{x \in S_n(x_0)} \IE^G \big[x_0 \overset{\varphi > a}{\mbox{\large $\longleftrightarrow$}} x, \chi_a(\varphi_x)\big] .
\end{equation}

\n
The last expectation is the same for all $x \in S_n(x_0)$. Using the Markov property repeatedly as in (\ref{2.5}), we find that
\begin{equation}\label{3.7}
\begin{split}
\IE^I \otimes \IE^G [M_n] & = (d+1) \, d^{n-1} \lambda(u,a)^{-n} p_0 \,p^n \big\langle1, \big(\textstyle \frac{1}{d} \;L_a\big)^n \chi_a \big\rangle_\nu
\\
&\hspace{-1ex} \stackrel{(\ref{2.1})}{=} \textstyle \frac{(d+1)}{d} \;\lambda(u,a)^{-n} p_0\,p^n \,\lambda^n_a \,\langle 1,\chi_a\rangle_\nu = \frac{(d+1)}{d} \;p_0  \,\langle 1, \chi_a\rangle_\nu \stackrel{\rm def}{=} A \stackrel{(\ref{2.1})}{>} 0.
\end{split}
\end{equation}

\n
We now turn to the control of the second moment of $M_n$. We have
\begin{equation}\label{3.8}
\begin{array}{l}
\IE^I \otimes \IE^G [M^2_n] = 
\\
\lambda(u,a)^{-2n} \sum\limits_{x,y \in S_n(x_0)} \IE^I \otimes \IE^G  \big[x_0 \overset{\cV^u \cap \{\varphi > a\}}{\mbox{\large $\longleftrightarrow$}} x, x_0 \overset{\cV^u \cap \{\varphi > a\}}{\mbox{\large $\longleftrightarrow$}} y, \chi_a (\varphi_x) \chi_a(\varphi_y)\big].
\end{array}
\end{equation}

\n
Given $x,y \in S_n(x_0)$, we write $x \wedge y$ for the point $z \in T$ such that $[x_0,z] = [x_0,x] \cap [x_0,y]$. We note that when $x \wedge y$ is at distance $k  (\in \{0, \dots,n\})$ from $x_0$, the expectation under the sum in (\ref{3.8}) equals:
\begin{equation}\label{3.9}
\begin{array}{l}
p_0 \,p^k \,p^{2(n-k)} \,\IE^G \big[x_0 \overset{\varphi > a}{\mbox{\large $\longleftrightarrow$}} x, x_0  \overset{\varphi > a}{\mbox{\large $\longleftrightarrow$}} y, \chi_a(\varphi_x) \,\chi_a(\varphi_y)\big] \underset{(\ref{2.1})}{\stackrel{\rm Markov \; property}{=}}
\\
p_0 \,p^{2n-k} \big(\textstyle \frac{1}{d} \;\lambda_a\big)^{2(n-k)} \IE^G \big[x_0 \overset{\varphi > a}{\mbox{\large $\longleftrightarrow$}}x \wedge y, \chi_a(\varphi_{x \wedge y})^2\big].
\end{array}
\end{equation}

\n
Applying the Markov property repeatedly once again, the above quantity equals
\begin{equation}\label{3.10}
\begin{array}{l}
p_0 \,p^{2n-k} \big(\textstyle \frac{1}{d} \;\lambda_a\big)^{2(n-k)}  \big\langle  \big(\textstyle \frac{1}{d} \;L_a\big)^k 1, \chi_a^2 \big\rangle_\nu \le
\\[1ex]
p_0 \,p^{2n-k} \big(\textstyle \frac{1}{d} \;\lambda_a\big)^{2n-k} \,\|\chi^2_a\|_{L^2(\nu)} \stackrel{(\ref{0.8}),(\ref{0.7})}{=} p_0 \big(\textstyle \frac{1}{d} \;\lambda(u,a)\big)^{2n-k} \| \chi^2_a\|_{L^2(\nu)}.
\end{array}
\end{equation}
Coming back to (\ref{3.8}) we thus find that
\begin{equation}\label{3.11}
\begin{split}
\IE^I \otimes \IE^G [M^2_n] & \le p_0 \,\|\chi_a^2\|_{L^2(\nu)} \textstyle \sum\limits^n_{k=0} \;\textstyle \sum\limits_{z \in S_k(x_0)} \;\sum\limits_{x,y \in S_n(x_0):x \wedge y = z} \big(\textstyle \frac{1}{d}\; \lambda(u,a)\big)^{-k} d^{-2n}
\\
& \le p_0 \,\|\chi_a^2\|_{L^2(\nu)}\textstyle  \sum\limits^n_{k=0} (d+1) \,d^{k-1} \,d^{2(n-k)} \,d^{k-2n} \,\lambda(u,a)^{-k}
\\
&\le \textstyle \frac{(d+1)}{d} \;p_0 \|\chi_a^2\|_{L^2(\nu)}  \sum\limits^\infty_{k=0} \lambda(u,a)^{-k} \stackrel{\rm def}{=} B < \infty
\end{split}
\end{equation}

\n
since $\lambda(u,a) > 1$ and $\chi_a$ belongs to all $L^p(\nu), p \ge 1$, by (\ref{2.1}). Note that $M_n = M_n \,1\{ 0 \,\overset{\cV^u \cap \{\varphi > a\}}{\mbox{\large $\longleftrightarrow$}} S_n(x_0)\}$. So, by the Cauchy-Schwarz Inequality we obtain (see (\ref{1.19}) for notation):
\begin{equation}\label{3.12}
A^2 \stackrel{(\ref{3.7})}{=} \IE^I \times \IE^G [M_n]^2 \le \tau_n(u,a) \,\IE^I \otimes \IE^G [M_n^2] \stackrel{(\ref{3.11})}{\le} \tau_n(u,a) \,B.
\end{equation}
Letting $n$ tend to infinity now yields
\begin{equation}\label{3.13}
\tau(u,a) \ge \textstyle \frac{A^2}{B} > 0.
\end{equation}

\n
This shows (\ref{3.2}) and concludes the proof of Theorem \ref{theo3.1}. 
\end{proof}

\smallskip
Thus, in view of Theorem \ref{theo3.1}, the curve $\lambda(u,a) = 1$ stands as a separation line between the region $\lambda(u,a) > 1$, where the vacant set level set percolation occurs and the region $\lambda(u,a) < 1$, where the vacant set level set percolation does not occur. With the help of Corollary \ref{cor2.2} we will gain further insight on the regions where $\tau$ is positive or vanishes, see also Figure 1 below.

\begin{corollary}\label{cor3.2} (see (\ref{0.3}), (\ref{0.7}) for notation)
\begin{align}
&\mbox{$\tau(u,a) > 0$ in a neighborhood in $\IR_+ \times \IR$ of $\big\{(u,a); u > 0, a \ge 0, u + \textstyle \frac{a^2}{2} = \frac{h^2_*}{2}\big\}$}, \label{3.14}
\\
&\mbox{$\tau(u,a) = 0$ in a neighborhood in $\IR_+ \times \IR$ of $\big\{(u,a); u \ge 0, a \ge 0, u + \textstyle \frac{a^2}{2} = u_*\big\}$}. \label{3.15}
\end{align}
\end{corollary}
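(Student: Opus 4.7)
The strategy is to combine the spectral characterisation from Theorem \ref{theo3.1} with the strict monotonicity along parabolic arcs from Corollary \ref{cor2.2}, and then extend from the arcs to open neighbourhoods simply by continuity of $\lambda$. Indeed, in view of (\ref{0.8}) and the continuity of $h \mapsto \lambda_h$ asserted in (\ref{0.6}), the function $\lambda$ is continuous on $\IR_+ \times \IR$. Therefore, if we can show that $\lambda > 1$ at every point of the arc in (\ref{3.14}) and $\lambda < 1$ at every point of the arc in (\ref{3.15}), then at each such point a small open ball on which the same strict inequality persists exists, and the union of these balls is an open neighbourhood of the arc on which $\tau$ is respectively positive or vanishing, by (\ref{3.2}) and (\ref{3.1}).

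For (\ref{3.14}), I would fix an arbitrary point $(u_0,a_0)$ with $u_0 > 0$, $a_0 \ge 0$, $u_0 + \frac{a_0^2}{2} = \frac{h_*^2}{2}$, and apply Corollary \ref{cor2.2} with $h = h_*$. The map $u \in [0,h_*^2/2] \mapsto \lambda(u,\sqrt{h_*^2 - 2u})$ is strictly increasing and takes the value $\lambda(0,h_*) = \lambda_{h_*} = 1$ at $u = 0$ (by (\ref{0.7})), so it takes a value strictly larger than $1$ at $u = u_0 > 0$, i.e.\ $\lambda(u_0,a_0) > 1$. By continuity of $\lambda$, there is an open neighbourhood of $(u_0,a_0)$ in $\IR_+ \times \IR$ on which $\lambda > 1$, and hence on which $\tau > 0$ by (\ref{3.2}). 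Taking the union over $(u_0,a_0)$ on the arc yields (\ref{3.14}).

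For (\ref{3.15}), I would apply Corollary \ref{cor2.2} instead with $h = \sqrt{2 u_*}$, so that $u \in [0,u_*] \mapsto \lambda(u,\sqrt{2u_* - 2u})$ is strictly increasing and thus attains its maximum on this parabolic arc at the endpoint $(u,a) = (u_*,0)$. The relation (\ref{0.3}) gives $e^{-u_*(d-1)^2/d} = 1/d$, so that
\begin{equation*}
\lambda(u_*,0) = \lambda_0 \, e^{-u_*(d-1)^2/d} = \frac{\lambda_0}{d} < 1,
\end{equation*}
where the strict inequality uses $\lambda_0 < d$, which is a direct consequence of (\ref{0.6}). Hence $\lambda < 1$ at every point of the (compact) arc $\{u \ge 0,\,a \ge 0,\,u + a^2/2 = u_*\}$, and by continuity $\lambda < 1$ on an open neighbourhood of this arc in $\IR_+ \times \IR$. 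Applying (\ref{3.1}) gives $\tau = 0$ on this neighbourhood, which is exactly (\ref{3.15}).

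There is essentially no serious obstacle here once Theorem \ref{theo3.1} and Corollary \ref{cor2.2} are in hand; the slightly delicate point is that the arc in (\ref{3.14}) must exclude the point $u = 0$, since at $(0,h_*)$ the strict monotonicity only yields $\lambda = 1$ (the critical equality) and Theorem \ref{theo3.1} is silent about this borderline case. This is precisely the role of the condition $u > 0$ in the statement. The extension of the neighbourhoods to values $a < 0$ poses no difficulty since the estimates are continuous on $\IR_+ \times \IR$.
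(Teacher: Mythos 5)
Your argument is correct and mirrors the paper's own proof: in both cases one uses the continuity of $\lambda$ on $\IR_+\times\IR$, applies Corollary \ref{cor2.2} along the two parabolic arcs with $h=h_*$ and $h=\sqrt{2u_*}$ to obtain the strict inequalities $\lambda>1$ and $\lambda<1$ respectively (the latter via $\lambda(u_*,0)=\lambda_0/d<1$ from (\ref{0.3}) and (\ref{0.6})), and then invokes Theorem \ref{theo3.1}. The only difference is expository detail; the underlying ideas coincide.
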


\begin{proof}
The function $\lambda(u,a)$ from (\ref{0.8}) is continuous on $\IR_+ \times \IR$. By Corollary \ref{cor2.2}, when $(u,a)$ belongs to the set that appears in (\ref{3.14}), $\lambda(u,a) > \lambda(0,h_*) = \lambda_{h_*} = 1$. Hence, the strict inequality $\lambda(u,a) > 1$ holds in a neighborhood of this set, and (\ref{3.14}) now follows from (\ref{3.2}) of Theorem \ref{theo3.1}.

\smallskip
As for (\ref{3.15}), note that the maximal value reached by $\lambda(u,a)$ on the set that appears in (\ref{3.15}) is by Corollary \ref{cor2.2} equal to $\lambda(u_*,0) = \lambda_0 \,e^{-u_* \frac{(d-1)^2}{d}} \stackrel{(\ref{0.3})}{=} \textstyle \frac{\lambda_0}{d} \stackrel{(\ref{0.6})}{<} 1$.
\end{proof}

\psfrag{0}{$0$}
\psfrag{a}{$a$}
\psfrag{u}{$u$}
\psfrag{uu}{$u_*$}
\psfrag{h}{$h_*$}
\psfrag{2u}{$\sqrt{2u_*}$}
\psfrag{t}{$\tau > 0$}
\psfrag{t0}{$\tau = 0$}
\psfrag{h2}{$h^2_*/2$}
\begin{center}
\includegraphics[width=7cm]{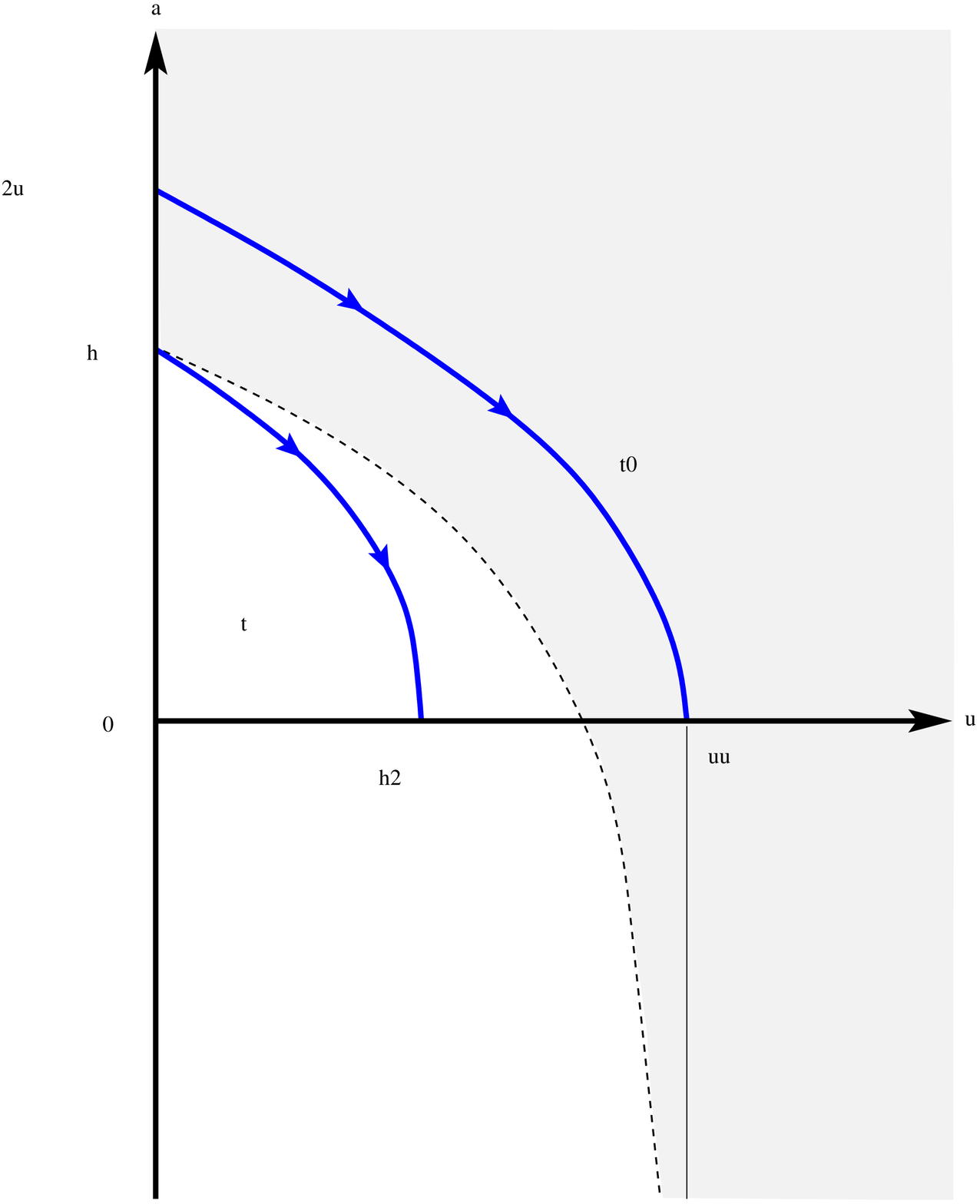}
\end{center}

\bigskip
\begin{center}
Fig.~1: The percolation diagram in the case of a regular tree.
\begin{tabular}{lp{15cm}}
& The dotted line corresponds to the ``critical line'' $\lambda(u,a) = 1$.
The two thick lines correspond to the arcs $(u, \sqrt{h^2 - 2u}$), $0 \le u \le \frac{h^2}{2}$,
with the respective choices $h = h_*$ and $h = \sqrt{2u_*}$. 
Along all such arcs $\tau$ is non-decreasing by Corollary 1.2.  The
regions $\tau = 0$ and $\tau > 1$ are depicted in accordance with Corollary \ref{cor3.2}
and the non-increasing property of $\tau$,  see Corollary \ref{cor1.2}.
\end{tabular}
\end{center}

\n
One can naturally wonder whether a similar looking percolation diagram holds as well in the case of the vacant set level set percolation on $\IZ^d$, $d \ge 3$. It is presently not known whether the two thick arcs in Figure 1 are distinct in this case (one knows that $0<h_* \le \sqrt{2 u_*} <  \infty$, and the positivity of $h_*$ for all  $d \ge 3$ has only recently been established in \cite{DrewPrevRodr}).



\begin{thebibliography}{10}

\bibitem{AbacSzni18}
A.~Ab\"acherli and A.S. Sznitman.
\newblock Level-set percolation for the {G}aussian free field on a transient
  tree.
\newblock {\em Annales de l'Institut Henri Poincar\'e-Probabilit\'es et
  Statistiques}, 54(1):173--201, 2018.


\bibitem{AizeGrim91}
M.~Aizenman and G.~Grimmett.
\newblock Strict monotonicity for critical points in percolation and
  ferromagnetic models.
\newblock {\em Journal of Statistical Physics}, 63(5-6):817--835, 1991.


\bibitem{CernTeix12}
J.~{\v{C}}ern\'y and A.~Teixeira.
\newblock From random walk trajectories to random interlacements.
\newblock {\em Sociedade Brasileira de Matem\'atica}, 23:1--78, 2012.


\bibitem{DrewPrevRodr}
A.~Drewitz, A.~Pr\'evost, and P.-F. Rodriguez.
\newblock The sign clusters of the massless {G}aussian free field percolate on
  $\IZ^d$, $d\ge 3$ (and more).
\newblock {\em {\rm To appear in Commun. Math. Phys., also available at
  arXiv:1708.03285}}.


\bibitem{DrewRathSapo14c}
A.~Drewitz, B.~R\'ath, and A.~Sapozhnikov.
\newblock {\it An Introduction to Random Interlacements}.
\newblock {\em {\rm SpringerBriefs in Mathematics, Berlin}}, 2014.


\bibitem{Folz14}
M.~Folz.
\newblock Volume growth and stochastic completeness of graphs.
\newblock {\em Trans. Amer. Math. Soc.}, 366(4):2089--2119, 2014.


\bibitem{Lupu16}
T.~Lupu.
\newblock From loop clusters and random interlacement to the free field.
\newblock {\em Ann. Probab.}, 44(3):2117--2146, 2016.


\bibitem{LupuSaboTarr}
T.~Lupu, Ch. Sabot, and P.~Tarr\`es.
\newblock Inverting the coupling of the signed {G}aussian free field with a
  loop soup.
\newblock {\em Preprint, {\rm available at arXiv:1701.01092v1}}.


\bibitem{MarcRose06}
M.B. Marcus and J.~Rosen.
\newblock {\em Markov processes, Gaussian processes, and local times}.
\newblock Cambridge University Press, 2006.


\bibitem{Math75}
G.~Matheron.
\newblock {\em Random Sets and Integral Geometry}.
\newblock Wiley, New York, 1975.

\bibitem{SaboTarr16}
Ch. Sabot and P.~Tarr\`es.
\newblock Inverting {R}ay-{K}night identity.
\newblock {\em Probab. Theory Related Fields}, 165(3):559--580, 2016.


\bibitem{Szni12b}
A.S. Sznitman.
\newblock An isomorphism theorem for random interlacements.
\newblock {\em Electron. Commun. Probab.}, 17(9):1--9, 2012.


\bibitem{Szni16}
A.S. Sznitman.
\newblock Coupling and an application to level-set percolation of the
  {G}aussian free field.
\newblock {\em Electron. J. Probab.}, 21(35):1--26, 2016.


\bibitem{Teix09b}
A.~Teixeira.
\newblock Interlacement percolation on transient weighted graphs.
\newblock {\em Electron. J. Probab.}, 14:1604--1627, 2009.


\bibitem{Zhai18}
A.~Zhai.
\newblock Exponential concentration of cover times.
\newblock {\em Electron. J. Probab.}, 23(32):1--22, 2018.


\end{thebibliography}
\end{document}